\algrenewcommand\algorithmicrequire{\textbf{Input:}}
\algrenewcommand\algorithmicensure{\textbf{Output:}}
\algrenewcommand\algorithmicforall{\textbf{For}}
\newtheorem{theorem}{Theorem}
\newtheorem{lemma}{Lemma}
\newtheorem{proposition}{Proposition}
\newtheorem{corollary}{Corollary}
\newtheorem{definition}{Definition} 
\newtheorem{proof}{Proof}
\newtheorem{remark}{Remark}
\newtheorem{fact}{Fact}
\newtheorem{assumption}{Assumption}
\newtheorem{example}{Example}
\newcommand{\R}{\mathbb{R}}
\newcommand*{\QEDA}{\hfill\ensuremath{\blacksquare}}
\newcommand\blfootnote[1]{%
  \begingroup
  \renewcommand\thefootnote{}\footnote{#1}%
  \addtocounter{footnote}{-1}%
  \endgroup
}
\title{\LARGE \bf 
	Actuator Placement for Structural Controllability\\ beyond Strong Connectivity and towards Robustness
}
\author{Baiwei Guo, Orcun Karaca, Sepide Azhdari, Maryam Kamgarpour, Giancarlo Ferrari-Trecate
\blfootnote{The work of B. Guo and G. Ferrari-Trecate received support from the Swiss National Science Foundation under the NCCR Automation (grant agreement 51NF40\_180545).\newline\indent B. Guo, S. Azhdari and G. Ferrari-Trecate are with Automatic Control Laboratory, Institute of Mechanical Engineering, École Polytechnique Fédérale de Lausanne, Switzerland, e-mails: {\{\tt baiwei.guo, sepide.azhdari, giancarlo.ferraritrecate\}@epfl.ch}\newline\indent O. Karaca is with ABB Corporate Research Center, Baden, Switzerland. email:  {{\tt orcun.karaca@ch.abb.com}}\newline\indent
M. Kamgarpour is with the Electrical and Computer Eng. at the University of British Columbia, Canada. email: {{\tt maryamk@ece.ubc.ca}}}}
\begin{document}
\maketitle 
\begin{abstract}\noindent Actuator placement is a fundamental problem in control design for large-scale networks. In this paper, we study the problem of finding a set of actuator positions by minimizing a given metric, while satisfying a structural controllability requirement and a constraint on the number of actuators. We first extend the classical forward greedy algorithm for applications to graphs that are not necessarily strongly connected. We then improve this greedy algorithm by extending its horizon. This is done by evaluating the actuator position set expansions at the further steps of the classical greedy algorithm. We prove that this new method attains a better performance, when this evaluation considers the final actuator position set. {Moreover, we study the problem of minimal backup placements. The goal is to ensure that the system stays structurally controllable even when any of the selected actuators goes offline, with minimum number of backup actuators. We show that this problem is equivalent to the well-studied NP-hard hitting set problem.} Our results are verified by a numerical case study.
\end{abstract}
\vspace{.1cm}
\section{Introduction}
\vspace{.1cm}
The steady progress in computation and communication technologies is enabling the deployment of large networks of systems, which require optimal and robust coordination. Prominent examples include power grids \cite{summers2015submodularity} and industrial control systems \cite{banerjee1995control}. A significant amount of research is currently focusing on the control design for such networks in order to achieve better performance and security~\cite{magnanti1984network, chabarek2008power}. A fundamental design problem concerns actuator placement which aims to select a subset from all possible actuator positions to place actuators such that a chosen network metric is minimized.

Typical metrics are set functions that map the actuator positions to the costs for completing certain tasks. In \cite{summers2015submodularity}, the authors study controllability metrics to evaluate the energy required to bring the system to an arbitrary state. Whereas, variants of the LQG cost are considered in~\cite{tzoumas2020lqg}. Since some of these metrics are neither submodular nor supermodular~\cite{SummersPerformance2019}, it is in general NP-hard to find the optimal set of actuators positions~\cite{EffortBounds}.\footnote{Many works concerning such nonmodular objectives instead consider approximate submodularity or supermodularity properties defined by certain ratios, see for instance~\cite{bian2017guarantees,Das:2011:SMS:3104482.3104615,karaca2018exploiting,karaca2020ej}.} 

The sets minimizing these aforementioned metrics are not guaranteed to make the resulting system controllable, considering that either the metrics are not related to controllability objective at all, or even when they are related, they involve some approximation techniques~\cite{guo2020actuator,guo2019actuator}. In view of this issue, structurally controllable systems constitute a desirable class. These systems are the ones that attain controllability after a slight perturbation of the system parameters corresponding to edge weights in the underlying network graph \cite{SturcturalcontrollabilityTai,ramos2020structural}. The concept of structural controllability, along with related variants \cite{li2020structural,jia2020unifying}, is based only on the graphical interconnection structure of the dynamical system and the actuator positions. The works ~\cite{liu2011controllability} and~\cite{commault2013input} study the minimal number of input nodes under the constraint of structural controllability. Another related research direction studies this problem with the same constraint but beyond modular metrics. The work in \cite{clark2012leader} considers leader selection to minimize control errors due to noisy communication links, whereas \cite{guo2020actuator} places actuators to reduce the approximate control
energy metric discussed in \cite{EffortBounds}. However, both studies assume strongly connected network graphs. In this case the corresponding problem can be formulated as a matroid optimization, allowing the use of efficient greedy heuristics. Hence, the first goal of this paper is to extend the non-modular metric minimization problem under structural controllability constraints to arbitrary graphs.

In \cite{clark2012leader,guo2020actuator}, Forward Greedy algorithm (\textbf{FG}) iteratively adds the most beneficial node to the leader or the actuator position sets under the constraints of structural controllability and a cardinality upper bound. This algorithm provides a solution that approximately minimizes the given metrics. Although there exist performance guarantees for the metric value under the actuator position set returned by \textbf{FG} \cite{bian2017guarantees,guo2020actuator}, this algorithm does not always generate satisfactory results \cite{tihanyi2021multirobot,karaca2021performance}. To overcome this problem, several variants have been investigated to improve the performance, including the Continuous Greedy Algorithm \cite{sviridenko2014optimal} and the Randomized Greedy Algorithm \cite{gao2018randomized}. These methods can potentially enjoy better cost upperbounds than \textbf{FG} in a probabilistic manner, however, they do not offer any ex-post performance improvement. Thus, the second goal of this paper is to derive a heuristic algorithm which is guaranteed to perform at least as good as \textbf{FG} (if not, better), while maintaining its polynomial complexity.  

Another issue in actuator placement is related to susceptibility of the actuators to faults. In extreme conditions, the actuators may go offline and thus may not be able to achieve the expected performance. Related research is focused on robust optimization for the worst-case scenario \cite{tzoumas2018resilient,hou2021robust}, efficient methods for contingency analysis \cite{chu2021short}, and security indices for protection of vulnerable actuators \cite{milovsevic2020actuator}. To the best of our knowledge, no previous work considers maintaining structural observability/controllability in case of offline sensor/actuators. Hence, in this paper, the third goal is to study methods to deploy backup actuators to enhance robustness of the network for structural controllability.

Targeting the aforementioned goals, our contributions are as follow. First, in case of a network graph which is not necessarily strongly connected, we propose a method to select actuators and, under certain conditions, we guarantee the structural controllability of the resulting system. Second, based on \textbf{FG}, we propose a novel method, called Long-Horizon Forward Greedy Algorithm (\textbf{LHFG}). {The performance of the actuator position set derived is guaranteed to be at least as good as (if not, better than) that of \textbf{FG}. Third, we
formulate the problem of minimal backup placements. The goal is to find the minimal backup actuator set such that the system can maintain its structural controllability even when any of the selected actuators goes offline. We show that this problem is equivalent to the hitting set problem.}

The remainder of this paper is organized as follows. In Section \ref{sec: Preliminaries}, we introduce the problem formulation and preliminaries. In Section \ref{sec: extensions}, we provide an extension of actuator placement to arbitrary graphs and we present \textbf{LHFG}, which is shown to provide an improvement over \textbf{FG}. In Section \ref{sec: Robust actuator placement}, we propose an efficient scheme for finding positions to place backups. To verify our results, we present a numerical case study in Section \ref{sec: a numerical case study}. Some conclusions are provided in Section~\ref{sec: conclusions}.

\vspace{.1cm}
\textbf{\textit{Remarks on notation:}} For notational simplicity, we use $v$ and $\{v\}$ interchangeably for singleton sets. Within pseudocodes of algorithms, we introduce several symbols for sets and variables to describe the steps of the algorithms. These notations, without being defined in the main body of this paper, will be used in the proofs.  
\vspace{.1cm}

\section{Problem formulations and preliminaries}
\label{sec: Preliminaries}
\subsection{Problem formulations}
\label{sec: Problem formulation}
Consider a linear system with state vector $x\in \R^n$. To each state variable $x_i\in \R$, we associate a node $v_i\in V:= \{v_1, \ldots, v_n\}$. If we place actuators on the set of nodes $S\subset V$, called the actuator position set, and apply an input vector $u\in \mathbb{R}^n$, the system dynamics can be written as
\begin{equation}
\label{eq: systemmodel}
\dot{x} = Ax + B(S)u,
\end{equation}
where $B(S)= \text{diag}(\bm {1}(S))\in \R^{n\times n}$ and $\bm {1}(S)$ denotes a vector of size $n$ whose $i$th entry is $1$ if $v_i$ belongs to $S$ and $0$ otherwise. Let $G=(V,E)$ denote a weighted directed graph associated with the adjacency matrix $A$ with nodes $V$ and edges $E$, where $|E|=l$ and the directed edge $(v_j,v_i)\in E$ if the associated weight $(A)_{ij}$ is non-zero. 

The pair $(A,B(S))$ is called \textit{controllable} if
for all $x_0,x_1\in\R^n$ and $T>0$ there exists a control input $u:[0,\, T]\rightarrow\R^n$ that steers the system from $x_0$ at $t=0$ to $x_1$ at $t=T$. For linear time-invariant systems, controllability can be verified by the rank of the controllability matrix $P=\begin{bmatrix}
   B(S) & AB(S) & \cdots & A^{n-1}B(S)
\end{bmatrix}\in \R^{n\times n^2}$. Due to potential errors in the identification of the edge weights, most of the time we can only rely on the topology but not on the particular weights. Motivated by this particularity, we introduce the weaker notion of structural controllability.
\begin{definition}
\label{def: structuralcontrollability}
We say that $(A,B)$ and $(\hat{A},\hat{B})$ with $A,B,\hat{A},\hat{B}$ $\in \R^{n\times n}$ have the same structure if matrices $[A\text{ }B]$ and $[\hat{A} \text{ }\hat{B}]$ have zeros at the same entries. Given $S\subset V$, $(A,B(S))$ is \textit{structurally controllable} if there exists a controllable pair $(\hat{A},\hat{B})$ having the same structure as $(A,B(S))$.
\end{definition}

As shown in~\cite{SturcturalcontrollabilityTai}, structural controllability of the pair $(A,B)$  further implies that even when $(A,B)$ is not controllable, it is always possible to slightly perturb some non-zero edge weights to ensure controllability. 

With this notion, the first problem we aim to solve is the following.

\vspace{.1cm}
\textbf{P1}: Suppose a
potentially non-modular and non-increasing metric $f:2^V\rightarrow\R$ is given. Find $K$ actuators attaining structural controllability while minimizing the metric at hand:
\begin{equation}
\begin{aligned}
& \min_{S\subset V,|S|= K}
& & f(S)
\\ 
&\quad\ \ \mathrm{s.t.} & & (A,B(S)) \text{ is structurally controllable.} 
\end{aligned}
\label{eq:mainproblem1}
\end{equation}
\vspace{.1cm}

Suppose ${S}$ is a feasible solution to \textbf{P1}, which could be either optimal or approximately optimal. The second problem considers the robustness of $(A,B({S}))$ against failures, which will be further motivated in Section~\ref{sec: Robust actuator placement}.

\vspace{.1cm}
\textbf{P2}: Given ${S}$, find the minimal backup actuator positions such that structural controllability can be retained when any single actuator at $v\in {S}$ malfunctions:
\begin{equation}
\begin{split}\label{eq:mainproblem_2}
\min_{\mathcal{B}\subset V} & \quad \rvert \mathcal{B}\rvert \\
 \mathrm{s.t.} & \quad \forall v\in {S}, \exists b_v\in \mathcal{B}, (A,B(\{{S}\setminus v\} \cup b_v)) \text{ is }
\\
 & \quad \text{structurally controllable}.
\end{split}
\end{equation}
\vspace{.1cm}

To define the constraints on these two problems, we need an efficient characterization of structural controllability.
\subsection{Characterization of structural controllability}
\label{sec: Structural controllability}
Structural controllabilty boils down to checking two graphical properties. A system $(A,B(S))$ is structurally controllable if and only if it satisfies \textit{accessibility} 
and \textit{dilation-freeness}\cite[Theorem~1]{SturcturalcontrollabilityTai}, which are defined as follow.

\begin{definition}
The system $(A,B(S))$ satisfies accessibility (or the nodes in $V$ are accessible by $S$) if for any $v\in V$ there exists a path from a node in $S$ to $v$ in $G = (V,E)$.
\end{definition}

Note that one can use Breadth First Search to verify accessibility condition. For dilation-freeness, on the other hand, we need a method to distinguish a node with an actuator from other nodes in the graph $G = (V,E)$. Let
$S''=\{v''_{s_1},,\ldots,v''_{s_K}\}$ denote a copy of the actuator position set $S=\{v_{s_1},\ldots,v_{s_K}\}$ and $E''$ denote edges connecting $v''_{s_i}$ to $v_{s_i}$, $i=1,\ldots,K$. Then, the graph $G'' = (V\cup S'',E\cup E'')$ illustrates explicitly how the actuators are connected to the nodes in $V$.

\begin{definition}The system $(A,B(S))$ is dilation-free if in $G''$ the in-neighbor set $N(U):=\{\tilde{v}\,|\,\exists v_u\in V\cup S'' \text{ s.t. } (\tilde{v},v_u)\in E\cup E''\}$ for any subset $U\subset V$ satisfies $|N(U)|\geq |U|$.
\end{definition} 

Dilation-freeness requirement for controllability can be interpreted as follows. 
Whenever there are fewer actuators connected to a set of nodes than the cardinality of the set under consideration,
we do not possess the flexibility to steer the states of these nodes arbitrarily to achieve any controllability notion.

In the following, we introduce methods for dilation-freeness checks via matchings in bipartite graphs \cite{cormen2009introduction,plummer1986matching}. 

We define concepts related to bipartite graphs. An undirected graph is called bipartite and denoted as $(V^1, V^2, \mathsf{E})$ if its vertices are partitioned into $V^1$ and $V^2$, while any undirected edge in $\mathsf{E}$ connects a vertex in $V^1$ to another in $V^2$. A matching $m$ is a subset of $\mathsf{E}$ where no two edges in $m$ share a vertex in common. Given a subset $L\subset$ $V^1\cup V^2$, we say $L$ is covered by $m$ if any $v\in L$ is incident to an edge in $m$. The matching $m$ is called maximum if it has the largest cardinality among all possible matchings and is called perfect if $V^2$ is covered.

We utilize an auxiliary bipartite graph to check dilation-freeness. It is constructed as follows. Let node sets $V'=\{v'_1,\ldots, v'_n\}$ and ${V''}=\{v''_1,\ldots, v''_n\}$ be two copies of $V =\{{v}_1,\ldots, {v}_n\}$. As for the edges, the set $\mathsf{E}$ consists of undirected edges connecting $v_{i}$ with $v'_{j}$ if $(v_i,v_j)\in E$, whereas the edge set $\mathsf{E}_S$ consists of undirected edges connecting $v'_k$ with $v''_k$ if $v_k\in S$. The auxiliary bipartite graph is then given by $\mathcal{H}_b(S) := (V\cup S'', V', \mathsf{E}\cup \mathsf{E}_S)$. With this graph at hand, the following lemma \cite[Proposition~7]{guo2020actuator} provides an efficient method to check dilation-freeness.
\begin{lemma}[Dilation-freeness check]
Let $\mathcal{C}_K=\{S\subset V\,|\,|S| = K$ and the pair $(A,B(S))$ is dilation-free$\}$ and $\mathcal{\tilde{C}}_K=\{\Omega\,|\,\exists S \in \mathcal{C}_K \text{ such that } \Omega \subset S \}$. With $\bar{m}(S)$ as a maximum matching in $\mathcal{H}_b(S)$, we have
\begin{enumerate}[label=(\roman*)]
    \item $S\in \mathcal{C}_K$ if and only if $|S|=K$ and $\bar{m}(S) = n$,
    \item $S\in \mathcal{\tilde{C}}_K$ if and only if $|S|\leq K$ and $\bar{m}(S) \geq n-K+|S|$. 
\end{enumerate}
\end{lemma}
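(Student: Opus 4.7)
The plan is to exploit the deficiency form of Hall's marriage theorem (equivalently, König's theorem), which states that in a bipartite graph the maximum matching size equals $|V'| - \max_{U'\subset V'}\bigl(|U'| - |N_{\mathcal{H}_b}(U')|\bigr)$, where $N_{\mathcal{H}_b}(U')$ denotes the neighborhood in the bipartite graph. The central observation is a bijection between subsets $U\subset V$ in the dilation condition on $G''$ and subsets $U' = \{v'_j : v_j\in U\}\subset V'$ in $\mathcal{H}_b(S)$: by construction of $\mathsf{E}$ and $\mathsf{E}_S$, the bipartite neighbors of $U'$ are exactly $\{v_i : (v_i,v_j)\in E \text{ for some } v_j\in U\} \cup \{v''_j : v_j\in U\cap S\}$, which coincides with the in-neighbor set $N(U)$ in $G''$. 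Hence dilation-freeness is equivalent to the Hall condition $|N_{\mathcal{H}_b}(U')|\geq |U'|$ for every $U'\subset V'$.

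For part (i), I would apply the equivalence above: the Hall condition for $V'$ holds iff a matching covering $V'$ exists, i.e., iff $\bar{m}(S)=|V'|=n$. Combined with the cardinality requirement $|S|=K$, this yields $S\in\mathcal{C}_K$, proving (i) in both directions.

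For part (ii), I would argue both implications separately. For the \emph{only if} direction, suppose $S\in\tilde{\mathcal{C}}_K$, so there exists $S^\star\supset S$ with $|S^\star|=K$ and $S^\star\in\mathcal{C}_K$. Passing from $\mathcal{H}_b(S)$ to $\mathcal{H}_b(S^\star)$ amounts to adding the $K-|S|$ pendant edges $\{v'_k v''_k : v_k\in S^\star\setminus S\}$. Since appending a single edge can raise the maximum matching by at most one, $n=\bar{m}(S^\star)\leq \bar{m}(S)+K-|S|$, giving the claimed inequality. For the \emph{if} direction, let $D\subset V$ be the set of indices $k$ such that $v'_k$ is uncovered by a fixed maximum matching of $\mathcal{H}_b(S)$; then $|D|=n-\bar{m}(S)\leq K-|S|$. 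I would first observe that $D\cap S=\emptyset$: if $v_k\in S\cap D$, then $v''_k$ is adjacent only to the uncovered $v'_k$, so the pendant edge $v'_kv''_k$ could be added to the matching, contradicting maximality. Therefore $S\cup D$ is disjoint and has cardinality at most $K$, and we can extend it to any set $S^\star\supset S\cup D$ of size exactly $K$. In $\mathcal{H}_b(S^\star)$, augmenting the original matching by the pendants $\{v'_k v''_k : k\in D\}$ covers all previously uncovered vertices of $V'$, so $\bar{m}(S^\star)=n$ and by (i) $S^\star\in \mathcal{C}_K$, whence $S\in\tilde{\mathcal{C}}_K$.

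The main obstacle I anticipate is the constructive step $D\cap S=\emptyset$ together with the augmentation argument in the sufficiency of (ii): one must be careful that adding extra vertices (beyond those in $D$, to reach cardinality exactly $K$) does not accidentally reduce the claim, and that the pendant structure of the new edges in $\mathsf{E}_S$ allows the uncovered vertices of $V'$ to be matched without disturbing the existing matching. Both are handled transparently because each new $v''_k$ is a leaf in $\mathcal{H}_b(S^\star)$, so the edges $\{v'_kv''_k : k\in D\}$ are mutually compatible with any matching of $\mathcal{H}_b(S)$ that leaves the vertices of $D$ uncovered.
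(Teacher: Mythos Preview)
Your argument is correct. The bijection between $U\subset V$ and $U'\subset V'$ is set up carefully enough that the Hall/deficiency characterization applies directly, and the pendant structure of $\mathsf{E}_S$ makes both the ``matching-increases-by-at-most-one'' bound and the augmentation step in (ii) clean. The only point worth making explicit (you implicitly use it) is that in the sufficiency of (ii), if $v_k\in S\cap D$ then $v''_k$ is necessarily unmatched because its sole neighbor $v'_k$ is unmatched; you state the conclusion but a reader may appreciate that one-line justification.

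As for comparison: the paper does not actually prove this lemma. It is stated with a citation to \cite[Proposition~7]{guo2020actuator} and no argument is given in the present paper. So there is no in-paper proof to compare against; your proposal supplies a self-contained proof where the paper relies on an external reference.
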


{Note that a maximum matching can easily be derived via Edmonds-Karp Algorithm with the complexity of ${O}(nl^2)$ \cite{edmonds1972theoretical}. We briefly discuss the motivation for defining $ \mathcal{\tilde{C}}_K$, a family of sets, and checking membership to it.
The set $\mathcal{\tilde{C}}_K$ contains all the sets that can be expanded to a set $S$ such that $|S|=K$ and $(A,B(S))$ is dilation-free. Checking membership with respect to $\mathcal{\tilde{C}}_K$ is essential to implementing the greedy algorithm to approximately solve \textbf{P1}, a popular polynomial-time heuristic in the literature to iteratively solve combinatorial optimization problems \cite{nemhauser1978analysis}. This method will be discussed in detail in Section \ref{sec: FG}.}

We illustrate these dilation-freeness checks with the following example.

\begin{example}
\label{exp: forwardFeas}
Consider a system described by $4$ nodes and the dynamic equations \eqref{eq: systemmodel} where
$${\small A = \begin{bmatrix}
   0 & -0.5 & -0.8 & -0.6\\
   1 & 0 & 0 & 0\\
   1 & 0 & 0 & 0\\
   1 & 0 & 0 & 0
\end{bmatrix}.}$$
The graph $G=(V,E)$ corresponding to the adjacency matrix $A$ is provided in Figure \ref{fig:system}. From the bipartite $\mathcal{H}_b(\emptyset)$ in Figure \ref{fig:bipartite}, we see that any maximum matching consists of 2 edges. Only if $K\geq 2$ do we have that $\bar{m}(\emptyset)\geq n-K+|\emptyset|$ and $\emptyset \in \tilde{\mathcal{C}}_K$, which is to say $\tilde{\mathcal{C}}_K \neq \emptyset$. Thus, $K=2$ is the minimum number of actuators required for dilation-freeness. Then we check the actuator set $S = \{v_3,v_4\}$. The bipartite graph $\mathcal{H}_b(S)$ is shown in Figure \ref{fig:bipartite}, from which we see the maximum matching contains $4$ edges and thus $S\in {\mathcal{C}}_2$.
\begin{figure}[t]
    \centering
    \includegraphics[width=0.3\linewidth]{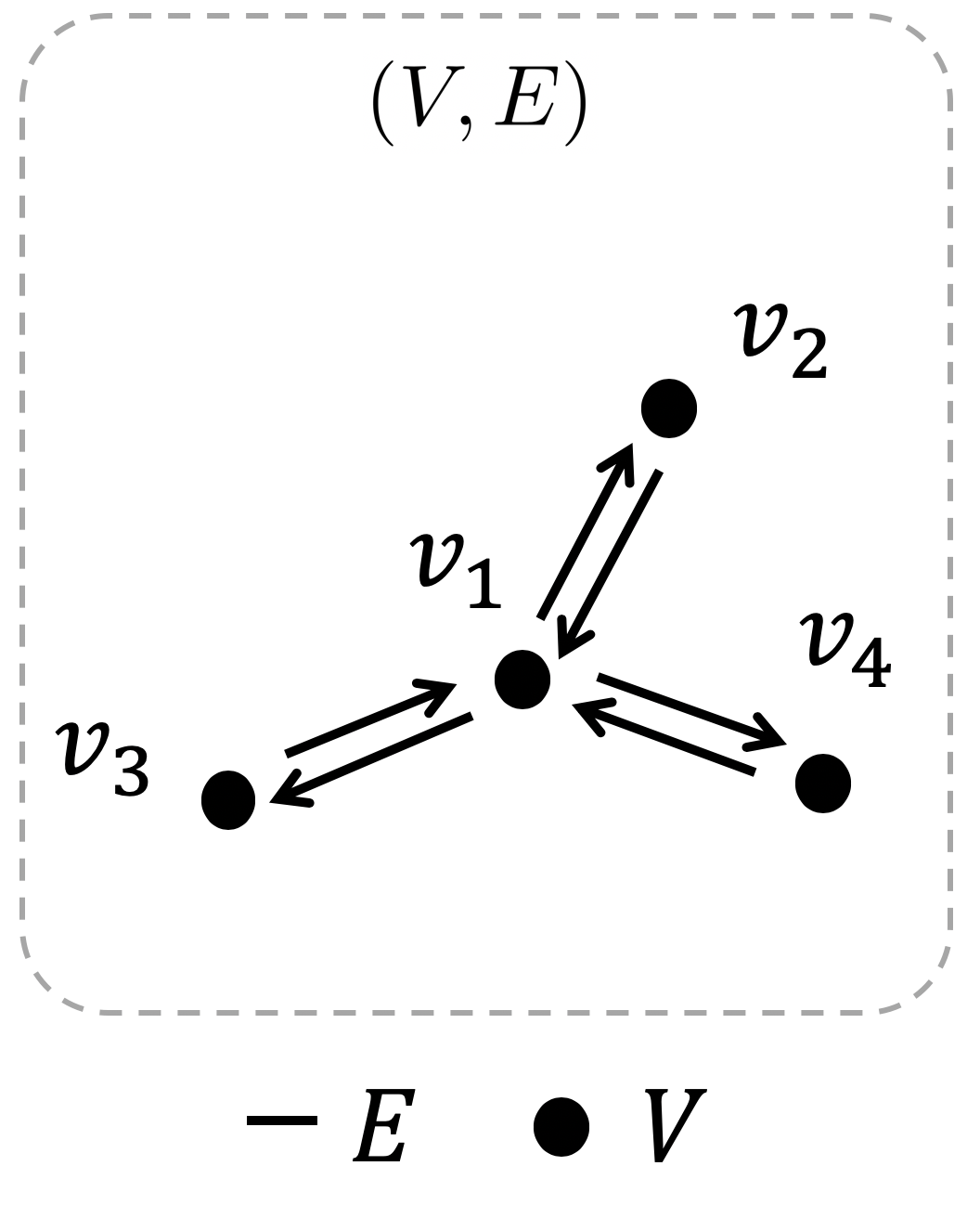}
    \caption{Graph for the 4-node system}
    \label{fig:system}
\end{figure}
\begin{figure}[t]
    \centering
    \includegraphics[width=0.6\linewidth]{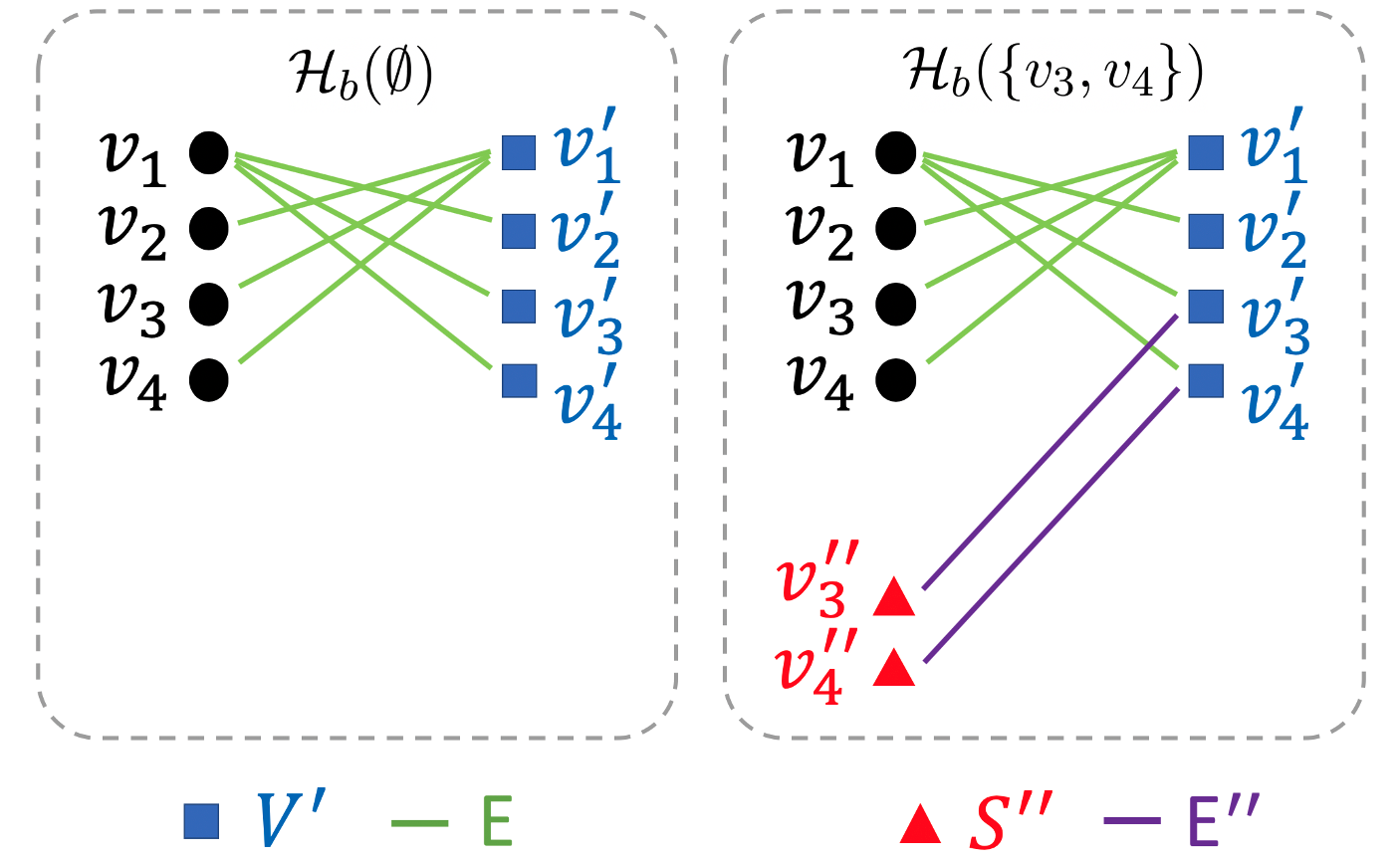}
    \caption{Auxiliary bipartite graphs $\mathcal{H}_b(\emptyset)$ and $\mathcal{H}_b(\{v_3,v_4\})$}
    \label{fig:bipartite}
\end{figure}
\end{example}

\subsection{Forward greedy algorithm for structural controllability}
\label{sec: FG}
A special instance of \textbf{P1} has been studied in \cite{clark2012leader,guo2020actuator}, where strong connectivity of $G$, the directed graph corresponding to the adjacency matrix $A$, is assumed. With this assumption, $(A,B(S))$ satisfies accessibility whenever $|S|>0$. Consequently, if $|S|\neq 0$, dilation-freeness implies structural controllability. \textbf{P1} is then reduced to minimization of $f(S)$ with the constraint $S\in \tilde{\mathcal{C}}_K$, which is a matroid optimization problem \cite{guo2020actuator}. 

The study in \cite{guo2020actuator} proposes to apply the Forward Greedy Algorithm (\textbf{FG}) through the function $\textproc{FG}(S^0,d)$ shown in Algorithm~\ref{alg:ALG_fg}. Among the inputs, $S^0$ denotes the initial set before expansion and $d$, called the \textit{depth}, denotes an upper bound for the number of expansions. In the pseudocode of Algorithm \ref{alg:ALG_fg}, the function $\textproc{IsMember}$ is implemented using the efficient graph-theoretical techniques introduced in Section \ref{sec: Structural controllability}. This algorithm can be justified by the following.

\begin{fact}[\cite{guo2020actuator}~]
If $G$ is strongly connected, with $d=K-|S_0|$ and $S^0\in\tilde{\mathcal{C}}_K$, $S_\mathsf{f} =\textproc{FG}(S^0,d)$ satisfies that $S_\mathsf{f}\in\mathcal{C}_K$.
\end{fact}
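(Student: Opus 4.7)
The plan is to argue by induction on the iteration counter $i$ of \textproc{FG} that the running set $S^i$ maintained internally always remains in $\tilde{\mathcal{C}}_K$, so that after $d=K-|S^0|$ successful greedy augmentations the returned set $S_\mathsf{f}$ has cardinality $K$ and lies in $\tilde{\mathcal{C}}_K$; the dilation-freeness lemma will then immediately deliver $S_\mathsf{f}\in\mathcal{C}_K$.

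The base case $S^0\in\tilde{\mathcal{C}}_K$ is precisely the standing hypothesis. For the inductive step, I would suppose that after $i<d$ iterations $S^i\in\tilde{\mathcal{C}}_K$ with $|S^i|=|S^0|+i<K$, and establish that the test $\textproc{IsMember}(S^i\cup\{v\},\tilde{\mathcal{C}}_K)$ is passed by at least one candidate $v\notin S^i$, so that the greedy selection never stalls. This follows directly from the definition of $\tilde{\mathcal{C}}_K$: because $S^i\in\tilde{\mathcal{C}}_K$, there exists $S^\star\in\mathcal{C}_K$ with $S^i\subset S^\star$, and since $|S^\star|=K>|S^i|$ the difference $S^\star\setminus S^i$ is non-empty. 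Any $v\in S^\star\setminus S^i$ yields $S^i\cup\{v\}\subset S^\star$, which by downward closure of $\tilde{\mathcal{C}}_K$ gives $S^i\cup\{v\}\in\tilde{\mathcal{C}}_K$. Consequently the $f$-minimizing greedy pick among feasible candidates is well defined, and $S^{i+1}\in\tilde{\mathcal{C}}_K$.

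After exactly $d=K-|S^0|$ iterations, the invariant yields $S_\mathsf{f}=S^d\in\tilde{\mathcal{C}}_K$ with $|S_\mathsf{f}|=K$. By part (ii) of the dilation-freeness lemma, the maximum matching in $\mathcal{H}_b(S_\mathsf{f})$ has size at least $n-K+K=n$; since no matching on the bipartite graph $\mathcal{H}_b(S_\mathsf{f})$ can exceed $n$, it has size exactly $n$. Part (i) of the same lemma then concludes that $S_\mathsf{f}\in\mathcal{C}_K$, as claimed.

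There is no serious obstacle here: the only subtlety worth flagging is that $\tilde{\mathcal{C}}_K$ is by construction the downward closure of $\mathcal{C}_K$, whence the augmentation step in the induction is immediate and no matroid-exchange machinery is needed. Strong connectivity does not enter the induction itself; its role is instead to justify why \textproc{FG} can legitimately impose a $\tilde{\mathcal{C}}_K$-membership constraint in place of the full structural controllability constraint, since under strong connectivity accessibility holds automatically whenever $|S|>0$, so dilation-freeness certified via $\bar{m}(S)=n$ is equivalent to structural controllability.
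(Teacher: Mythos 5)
The paper does not actually prove this statement---it is imported as a Fact with a citation to \cite{guo2020actuator}---so there is no in-paper proof to compare against; the closest analogue is the induction in the proof of Proposition~1, which your argument mirrors. Your reconstruction is correct: the invariant $S^i\in\tilde{\mathcal{C}}_K$, the existence of a feasible augmenting candidate via the downward closure of $\tilde{\mathcal{C}}_K$, and the final step from $|S_\mathsf{f}|=K$ and $S_\mathsf{f}\in\tilde{\mathcal{C}}_K$ to $S_\mathsf{f}\in\mathcal{C}_K$ (which you could get even more directly, since a cardinality-$K$ subset of a set in $\mathcal{C}_K$ must equal that set). One point deserves an explicit sentence rather than being folded into ``the greedy never stalls'': Algorithm~\ref{alg:ALG_fg} permanently discards rejected candidates into $U$, so you must rule out that every $v\in S^\star\setminus S^i$ was already placed in $U$ at some earlier iteration $j<i$ and is therefore no longer available. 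This follows from the same downward-closure argument applied retroactively---since $S^j\subset S^i\subset S^\star$, any such $v$ satisfies $S^j\cup v\subset S^\star$, hence $S^j\cup v\in\tilde{\mathcal{C}}_K$, so $v$ could not have failed the membership test earlier without being added to $S^j$---but as written your induction only exhibits a feasible candidate in the abstract, not one that the algorithm can still reach. Your closing observation that strong connectivity plays no role in the $\mathcal{C}_K$ conclusion is also correct and is consistent with the paper's later use of the same argument for arbitrary graphs in Propositions~1 and~2.
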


In addition to the fact above, one can utilize special properties of matroids~\cite{edmonds1971matroids} and provide a suboptimality bound on $f(S_\mathsf{f} =\textproc{FG}(\emptyset,K))$ with respect to $f(S^*)$, given the submodularity ratio and the curvature of the metric $f$~\cite{karaca2021performance}.
{However, these results rely on the strong connectivity of $G$. If the digraph $G$ is not strongly connected, dilation-freeness of $(A,B(S))$ does not necessarily lead to structural controllability. In view of this, the greedy solution may fail to attain structural controllability.}

In Section~\ref{sec: extensions} below, targeting \textbf{P1}, we 1) extend \textbf{FG} beyond the assumption of strong connectivity by providing an efficient method to attain the accessibility condition and 2) propose a novel algorithm based on \textbf{FG} with a longer horizon and ensuring a better performance. In Section 
\ref{sec: Robust actuator placement}, we address the solution to \textbf{P2}.

\begin{algorithm}[t]
        \caption{Forward Greedy Algorithm}
        \label{alg:ALG_fg}
        {\begin{algorithmic}
        \Require initial set $S^0$ and depth $d$ 
        \Ensure full actuator position set $S_\mathsf{f}$
        \Function {FG}{$S^0,d$}
        \State {$U^0= V\setminus S^0$, $t=1$}
        \While{$U^{t-1}\neq V$ {, } $|S^{t-1}|<K$ and $t\leq d$}
        \State{${v^*}= \arg\max_{i\in V\setminus U^{t-1}}f(S^{t-1})-f(S^{t-1}\cup i)$}
            \If {\textproc{IsMember}($S^{t-1}\cup v^*, \tilde{\mathcal{C}}_K$)}
        \State $v_\mathsf{f}^t \gets v^*$
        \State $S^{t} \gets S^{t-1}\cup v_\mathsf{f}^t $ and $U^{t}\gets U^{t-1}\cup v_\mathsf{f}^t  $
        \State $t\gets t+1$
        \Else
        \State $U^{t-1}\gets U^{t-1}\cup v^* $
        
    \EndIf    
            \EndWhile
            \State $S_\mathsf{f} \gets S^{t-1}$
        \EndFunction
        \end{algorithmic}}
    \end{algorithm}

\section{Extensions to the greedy heuristics}
\label{sec: extensions}
\subsection{Beyond strong connectivity}
In case the graph $G=(V,E)$ is not strongly connected, we consider finding an initial set $S_0$ before running \textbf{FG} such that all the nodes in $V$ are accessible by $S_0$. 

We start by highlighting that the well-establish Kosaraju's algorithm \cite{cormen2009introduction} can construct all the strongly connected components in the directed graph $G$ in linear time, that is, with complexity $O(n+l)$, where $l$ is the number of edges in $G$. Whenever $G$ is not strongly connected, there would be components with no incoming edges and we denote them as $\mathcal{T}^1,\ldots,\mathcal{T}^{n_\mathcal{T}}$. If all the nodes in $V$ are accessible by a set $S$, then for any $1\leq i\leq n_\mathcal{T}$ the component $\mathcal{T}^i$ guarantees that $\mathcal{T}^i\cap S\neq \emptyset$, otherwise the nodes in $\mathcal{T}^i$ would not be accessible. Based on this idea, we propose Algorithm \ref{alg:wms} to construct $S^0$ when $G$ is not strongly connected.

The following proposition proves that under certain conditions one can use Algorithm \ref{alg:ALG_fg} to further expand $S^0$ such that the derived system is structurally controllable.

\begin{algorithm}[t]
        \caption{Initial Set Construction}
        \label{alg:wms}
        {\begin{algorithmic}
        \Require graph $G=(V,E)$
        \Ensure an actuator position set $S^0$
        \Function {IniS}{$G$}
        \State {Find all strongly connected components with no incoming edges, denoted as $\mathcal{T}^i=\{v^i_1\ldots,v^i_{p_i}\}$} for any $1\leq i \leq n_\mathcal{T}$ using Kosaraju’s algorithm
        \State {$S^{0,0} = \emptyset$}
        \For{\texttt{$t = 1,\ldots,n_\mathcal{T}$}}
            \State Find $\mathcal{P}^t = \{v\in \mathcal{T}^t|S^{0,t-1}\cup v\in \tilde{\mathcal{C}}_K\}.$
            \State ${v^t_{j^t_*}}= \arg\min_{v^t_{j}\in \mathcal{P}^t}f(S^{0,t-1}\cup v^t_{j})$
            \State $S^{0,t} \gets S^{0,t-1}\cup v^t_{j^t_*}$
            \EndFor
            \State $S^{0} \gets S^{0,n_\mathcal{T}}$
        \EndFunction
        \end{algorithmic}}
    \end{algorithm}
    
\begin{proposition}
\label{prop: arbitrary_graphs }
Let $k$ be the smallest integer such that $\mathcal{C}_k\neq \emptyset$. If $n_\mathcal{T}$ satisfies $n> K\geq k + n_\mathcal{T}$, then\begin{enumerate}[label=(\roman*)]
    \item the set $S^0$ derived through Algorithm 2 belongs to $\mathcal{\tilde{C}}_K$,
    \item By applying Algorithm~\ref{alg:ALG_fg} to further expand $S^0$ and obtaining $S_\mathsf{f} = \textproc{FG}(S^0,K-|S^0|)$, the resulting system $(A,B(S_\mathsf{f}))$ is structurally controllable.
    \end{enumerate}
\end{proposition}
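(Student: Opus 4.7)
The plan is to handle (i) by induction on the iteration index $t$ of Algorithm~\ref{alg:wms}, and then derive (ii) by combining (i) with an accessibility argument for $S^0$ plus the correctness of \textbf{FG} on an initial set already in $\tilde{\mathcal{C}}_K$. The crucial preliminary observation I would record first is that dilation-freeness is monotone under extension: if $(A,B(S))$ is dilation-free and $S\subset S'$, then $(A,B(S'))$ is also dilation-free, since in the auxiliary graph $G''$ every extra actuator only enlarges the in-neighbor sets $N(U)$. Combined with $\mathcal{C}_k\neq\emptyset$ and $K\geq k$, this yields $\mathcal{C}_K\neq\emptyset$, hence $\emptyset\in\tilde{\mathcal{C}}_K$, giving the base case $S^{0,0}=\emptyset$.

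For the inductive step of (i), assuming $S^{0,t-1}\in\tilde{\mathcal{C}}_K$, I would exhibit an explicit witness to $\mathcal{P}^t\neq\emptyset$: fix any $S^*\in\mathcal{C}_k$ and any $v^i\in\mathcal{T}^i$ for $i=t,\ldots,n_\mathcal{T}$, and set
\[ \hat S = S^{0,t-1}\cup S^*\cup\{v^t,\ldots,v^{n_\mathcal{T}}\}. \]
By the bookkeeping $|\hat S|\leq (t-1)+k+(n_\mathcal{T}-t+1)=k+n_\mathcal{T}\leq K<n$ and by monotonicity of dilation-freeness, $\hat S$ is dilation-free; padding it with arbitrary additional nodes up to size $K$ produces an element of $\mathcal{C}_K$ containing $\hat S$, so $\hat S\in\tilde{\mathcal{C}}_K$. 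Downward closure of $\tilde{\mathcal{C}}_K$ then forces $S^{0,t-1}\cup v^t\in\tilde{\mathcal{C}}_K$, i.e.\ $v^t\in\mathcal{P}^t$, and in particular the minimizer chosen by the algorithm also lies in $\mathcal{P}^t$ and yields $S^{0,t}\in\tilde{\mathcal{C}}_K$. This closes the induction and gives $S^0=S^{0,n_\mathcal{T}}\in\tilde{\mathcal{C}}_K$ with $|S^0|=n_\mathcal{T}$.

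For (ii), accessibility is immediate: $S^0$ contains one node from every source strongly connected component, and in the condensation DAG every SCC is reachable from some source SCC, so every node in $V$ is accessible from $S^0$. For dilation-freeness, I would invoke the matroid structure of $\tilde{\mathcal{C}}_K$ underlying the cited fact from \cite{guo2020actuator}: \textproc{FG} starting from $S^0\in\tilde{\mathcal{C}}_K$ with depth $K-|S^0|$ maintains the invariant $S^t\in\tilde{\mathcal{C}}_K$ (because of the \textproc{IsMember} guard) and, by matroid augmentation, it can always find a feasible $v^*$ until cardinality $K$ is reached. Hence $|S_\mathsf{f}|=K$ with $S_\mathsf{f}\in\tilde{\mathcal{C}}_K$, which forces $S_\mathsf{f}\in\mathcal{C}_K$. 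Since accessibility is preserved by extension, $S^0\subset S_\mathsf{f}$ implies accessibility of $S_\mathsf{f}$, and together with dilation-freeness this gives structural controllability of $(A,B(S_\mathsf{f}))$.

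The main obstacle is the inductive step of (i): the witness $\hat S$ must simultaneously absorb the previously greedy-selected set $S^{0,t-1}$, contribute at least one node from the current source SCC $\mathcal{T}^t$, and reserve room for the remaining source SCCs, all without exceeding the dilation-free budget encoded by $K$. The hypothesis $K\geq k+n_\mathcal{T}$ is precisely the slack that makes this accounting close. The rest of the argument ultimately rests on two reusable facts, namely monotonicity of dilation-freeness under extension and downward closure of $\tilde{\mathcal{C}}_K$, which I would establish once and then invoke throughout.
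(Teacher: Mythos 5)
Your proof is correct and follows essentially the same route as the paper's: monotonicity of dilation-freeness under extension plus a cardinality-bounded witness set (the paper uses $S^{0,t-1}\cup v\cup W$ with $W\in\mathcal{C}_k$; your $\hat S$ just adds harmless extra nodes) to run the induction for (i), then accessibility from one node per source SCC and the same downward-closure/augmentation argument for the \textproc{FG} expansion in (ii). You merely spell out a few steps the paper leaves implicit (the base case $\emptyset\in\tilde{\mathcal{C}}_K$ and the padding to cardinality $K$), so no substantive difference.
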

\begin{proof}
For the proof of (i), for any $t$, we verify that $v^t_{j^t_*}$ is well defined and we then show that $S^{0,t}\in \tilde{\mathcal{C}}_K$. 

Let $W$ be a set in $ \mathcal{C}_k$ and notice that for any $S\subset V$ the system $(A,B(W\cup S))$ is dilation-free. In Algorithm~\ref{alg:wms}, since for $1\leq t \leq n_\mathcal{T}$ and any $v\in V$ we have $|S^{0,t-1}\cup v|\leq  n_\mathcal{T}$, $S^{0,t-1}\cup v\cup W\in \mathcal{\tilde{C}}_K$ and thus $\mathcal{P}^t\neq \emptyset$, which means for each $t$ we can derive $v^t_{j^t_*}\in \mathcal{P}^t$. Suppose $S^{0,t-1}\in \tilde{\mathcal{C}}_K$, then $S^{0,t} = S^{0,t-1}\cup v^t_{j^t_*}\in \mathcal{\tilde{C}}_K$. By induction, $S^0\in\mathcal{\tilde{C}}_K$.

Regarding the statement (ii), to prove the structural controllability of $(A,B(S_\mathsf{f}))$, we need to show this system satisfies both accessibility and dilation-freeness. 

Since for any $t$ $v^t_{j^t_*}\in \mathcal{T}^t$, $S^0\cap \mathcal{T}^t \neq \emptyset$. From graph theory, any node $v\in V$ is accessible by $S^0\subset S_\mathsf{f}$. The method to prove dilation-freeness follows closely that of ``$S^{0,t}\in \mathcal{\tilde{C}}_K$" provided above, and thus omitted.\hfill\QEDA
\end{proof}

In case a large number of actuators is allowed, the condition ``$n> K\geq k + n_\mathcal{T}$'' is not restrictive. Whenever this condition does not hold, we may still obtain $S^0\in \tilde{\mathcal{C}}_K$, and we can then use \textbf{FG} to expand $S^0$.

To summarize, in case $G$ is not strongly connected, our overall method for actuator placement is to first run Algorithm~\ref{alg:wms} to obtain the initial set $S^0=\textproc{IniS}(G)$ and then to execute Algorithm~\ref{alg:ALG_fg} to obtain the actuator position set $S_\mathsf{f} = \textproc{FG}(S^0,K-|S^0|)$. Since this procedure is not the standard greedy algorithm, the upper bounds for the suboptimality gaps in previous works are not applicable. {We leave deriving a guarantee towards the performance of the actuator position set $\textproc{FG}(\textproc{IniS}(G),K-|S_0|)$ as a future work.}

\subsection{Beyond myopic decisions}
\label{sec: Long-horizon Greedy Algorithm}
This section derives a method that improves upon \textbf{FG} in terms of the performance of the final actuator position set.

For the iterations in Algorithm \ref{alg:ALG_fg}, the marginally most beneficial nodes, $v^1_\mathsf{f},\ldots, v^{K-|S^0|}_\mathsf{f}$, are added one after the other. These decisions are myopic, since a node added at some iteration might make it harder to further decrease the given metric in later iterations. To mitigate this issue, we equip \textbf{FG} with a long horizon, as shown in Algorithm \ref{alg:ALG_lfg}. This way, we can evaluate whether to add a node based on how the addition of this node influences the actuator position set expansion in the future. 

We call this new method Long-Horizon Greedy Algorithm (\textbf{LHFG}). We achieve the longer horizon by using \textbf{FG} to further expand starting from the node we are considering. The depth of this embedded \textbf{FG} is called the horizon of \textbf{LHFG}.

\begin{algorithm}[t]
        \caption{Long-horizon Greedy Algorithm}
        \label{alg:ALG_lfg}
        {\begin{algorithmic}
        \Require actuator position set $S^0$ and horizon $d_\mathrm{h}$
        \Ensure an actuator position set $S_{\mathsf{lf}}$
        \Function {LHFG}{$S^0,d_\mathrm{h}$}
        \State {$t\gets1$} and $S_{\mathsf{lf}}^0 = S^0$
        \While{ $|S_{\mathsf{lf}}^{t-1}|<K$} 
            \State Find $\mathcal{R}^t = \{v $ $| \textproc{ IsMember}(S_{\mathsf{lf}}^{t-1}\cup v,\tilde{\mathcal{C}}_K) = 1\}$
            \State Calculate by enumeration
            {${v_{\mathsf{lf}}^t}= \arg\min_{v\in \mathcal{R}^t}f(\textproc{FG}(S_{\mathsf{lf}}^{t-1}\cup v,d_\mathrm{h}))$}
            \State $S_{\mathsf{lf}}^{t} \gets S_{\mathsf{lf}}^{t-1}\cup v_{\mathsf{lf}}^t $ and $t\gets t+1$
            \EndWhile
            \State $S_\mathsf{lf} \gets S_\mathsf{lf}^{t-1}$
        \EndFunction
        \end{algorithmic}}
\end{algorithm}

The following proposition proves that with a horizon long enough, \textbf{LHFG} has a performance no worse than \textbf{FG}.
\begin{proposition}
For an arbitrary network, if $S^0\in \tilde{\mathcal{C}}_K$, $f(S_{\mathsf{lf}})\leq f(S_{\mathsf{f}})$, where $S_{\mathsf{lf}} = \textproc{LHFG}(S^0,K-|S^0|)$ and $S_{\mathsf{f}} = \textproc{FG}(S^0,K-|S^0|)$.
\end{proposition}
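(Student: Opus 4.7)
The plan is to argue by induction that a ``projected cost'' attached to each iteration of \textbf{LHFG} is non-increasing, and that the very first such value is already bounded above by $f(S_{\mathsf{f}})$. Let $T = K - |S^0|$ denote the total number of LHFG iterations and let $d_h = K - |S^0|$ be the fixed horizon. For $t = 1,\ldots,T$ define
\[
L^t := f\bigl(\textproc{FG}(S_{\mathsf{lf}}^{t-1} \cup v_{\mathsf{lf}}^t,\, d_h)\bigr) = f\bigl(\textproc{FG}(S_{\mathsf{lf}}^t, d_h)\bigr).
\]
The target inequality will follow once I show $f(S_{\mathsf{lf}}) = L^T \le L^{T-1} \le \cdots \le L^1 \le f(S_{\mathsf{f}})$.

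For the base case, I would note that at $t=1$ the candidate set $\mathcal{R}^1$ contains $v_{\mathsf{f}}^1$, the first node picked by $\textproc{FG}(S^0,d_h)$ (feasibility is certified by the \textproc{IsMember} test inside FG). Crucially, an FG call restarted from $S^0 \cup v_{\mathsf{f}}^1$ with horizon $d_h$ reproduces exactly the remaining FG trajectory from $S^0$, because FG's myopic argmax at each subsequent step depends only on the current accumulated set and yields the same $v_{\mathsf{f}}^2, v_{\mathsf{f}}^3,\ldots$ Hence $f(\textproc{FG}(S^0\cup v_{\mathsf{f}}^1, d_h)) = f(S_{\mathsf{f}})$. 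Since $v_{\mathsf{lf}}^1$ is chosen as the argmin over $\mathcal{R}^1$, we get $L^1 \le f(S_{\mathsf{f}})$.

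For the inductive step, I would take $w^\star$ to be the first node that $\textproc{FG}(S_{\mathsf{lf}}^t, d_h)$ would insert. It is feasible, hence $w^\star \in \mathcal{R}^{t+1}$, and the same ``restart'' argument as above gives $f(\textproc{FG}(S_{\mathsf{lf}}^t \cup w^\star, d_h)) = f(\textproc{FG}(S_{\mathsf{lf}}^t, d_h)) = L^t$. Because $v_{\mathsf{lf}}^{t+1}$ minimises over $\mathcal{R}^{t+1}$, $L^{t+1} \le L^t$. At the final iteration, $|S_{\mathsf{lf}}^T| = K$, so the embedded FG call terminates immediately and $L^T = f(S_{\mathsf{lf}}^T) = f(S_{\mathsf{lf}})$, completing the chain.

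The step I expect to be most delicate is justifying the ``restart identity'' $\textproc{FG}(S \cup w^\star, d_h) = \textproc{FG}(S, d_h)$ when $w^\star$ is the first FG pick from $S$. This requires that FG's greedy tie-breaking is consistent across the two calls and that the rejection bookkeeping via $U$ does not interact with the restart in a pathological way; I would formalise this with a short auxiliary lemma comparing the two runs step-by-step. A secondary, routine task is to confirm that $\mathcal{R}^t \ne \emptyset$ along the whole LHFG run: since $S^0 \in \tilde{\mathcal{C}}_K$, inductively $S_{\mathsf{lf}}^{t-1} \in \tilde{\mathcal{C}}_K$, and the hereditary structure of $\tilde{\mathcal{C}}_K$ supplies a feasible extension whenever $|S_{\mathsf{lf}}^{t-1}| < K$, so neither algorithm stalls before reaching size $K$.
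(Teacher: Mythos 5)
Your proof is correct and follows essentially the same route as the paper: your projected cost $L^t = f(\textproc{FG}(S_{\mathsf{lf}}^t, d_h))$ is exactly the paper's $f(\bar{S}^t_{\mathsf{lf}})$, and the monotone chain $L^{t+1}\le L^t$ rests on the same key observation (the ``restart identity'' for the first node FG would insert), which the paper states as the existence of $\tilde v$ with $\bar{S}^t_{\mathsf{lf}} = \textproc{FG}(S^t_{\mathsf{lf}}\cup\tilde v, K-|S^0|)$. Your anchoring of the chain at $t=1$ via $v^1_{\mathsf f}\in\mathcal{R}^1$ is a minor (and arguably cleaner) variant of the paper's device of locating the first index where the two trajectories diverge, and your explicit flagging of the restart identity and of $\mathcal{R}^t\neq\emptyset$ as the points needing care is apt, since the paper leaves both implicit.
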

\begin{proof}
Following the proof of Proposition \ref{prop: arbitrary_graphs } regarding dilation-freeness, one can see that $S_\mathsf{lf}$ and $S_\mathsf{f}$ are both well-defined and belong to the set $\mathcal{C}_K$. Next, we show $S_\mathsf{lf}$ achieves a lower metric value. 

By denoting $\bar{S}^{t}_{\mathsf{lf}}:=\textproc{FG}(S_{\mathsf{lf}}^{t},K-|S^0|)$, we claim that $f(\bar{S}^{t}_{\mathsf{lf}}) \geq f(\bar{S}^{t+1}_{\mathsf{lf}}) $ for $t<K-|S_0|$. To prove this, we notice that $\bar{S}^{t}_{\mathsf{lf}}\setminus S_{\mathsf{lf}}^{t} \subset \mathcal{R}^{t+1}$ and there exists $\tilde{v}\in \bar{S}^{t}_{\mathsf{lf}}\setminus S_{\mathsf{lf}}^{t}$ such that $\tilde{v}\in \mathcal{R}^{t+1}$ and $\bar{S}^{t}_{\mathsf{lf}} = \textproc{FG}(S_{\mathsf{lf}}^{t}\cup \tilde{v},K-|S^0|)$, therefore, $f(\bar{S}^{t+1}_{\mathsf{lf}}) = \arg\min_{v\in \mathcal{R}^{t+1}}f(\textproc{FG}(S_{\mathsf{lf}}^{t}\cup v,K-|S^0|))\leq f(\textproc{FG}(S_{\mathsf{lf}}^{t}\cup \tilde{v},K-|S^0|)) = f(\bar{S}^{t}_{\mathsf{lf}})$. 

To proceed, we recall that $v_{\mathsf{lf}}^1,\ldots,v_{\mathsf{lf}}^{K-|S_0|}$, in order, are added to form $S_{\mathsf{lf}}$ while $v_{\mathsf{f}}^1,\ldots,v_{\mathsf{f}}^{K-|S_0|}$, in order, are added to form $S_{\mathsf{f}}$. If $i$ is the smallest integer such that $v_{\mathsf{lf}}^i \neq v_{\mathsf{f}}^i$, $f(S_{\mathsf{f}}) = f(\bar{S}^{i-1}_{\mathsf{lf}})\geq f(\textproc{FG}(S_{\mathsf{lf}}^{i-1}\cup v^i_{\mathsf{lf}},K-|S^0|))= f(\bar{S}^{i}_{\mathsf{lf}})\geq \bar{S}^{K-|S^0|}_{\mathsf{lf}}$. By noticing that $S_{\mathsf{lf}} = \bar{S}^{K-|S^0|}_{\mathsf{lf}}$, we have $f(S_{\mathsf{lf}})\leq  f(S_{\mathsf{f}})$. \hfill\QEDA
\end{proof}

We cannot guarantee that $f(S_{\mathsf{lf}})< f(S_{\mathsf{f}})$ because of two potential cases, i) $S_{\mathsf{lf}} = S_{\mathsf{f}}$ and ii) $f(S_{\mathsf{lf}})= f(S_{\mathsf{f}})$ even if $S_{\mathsf{lf}} \neq S_{\mathsf{f}}$. However, in practice, we expect these two cases to be rare. We refer the readers to Section \ref{sec: a numerical case study} for a numerical case study where $f(S_{\mathsf{lf}})$ is significantly smaller than $f(S_{\mathsf{f}})$.

Notice that, to derive $S_{\mathsf{lf}} = \textproc{LHFG}(\textproc{IniS}(G),K-|S_0|)$, most of the computational time would be spent on long horizon evaluations. During these evaluations, we execute \textbf{FG} for at most  $K(2n-K+1)/2$ times. Suppose the complexity of calculating $f(S)$ is ${O}(q(n))$. Note that the complexity of $\textproc{ISMEMBER}$ is ${O}(nl^2)$ from Edmonds-Karp algorithm. One can then verify that the complexity of \textbf{LHFG} is ${O}(Kn^3(q(n)+l^2))$. To reduce the computational complexity, one can shorten the horizon $d_\mathrm{h}$ of \textbf{LHFG}. However, it is then not possible to guarantee better performance than \textbf{FG}, as in the proposition above. In Section \ref{sec: a numerical case study}, we study the computational time and the derived actuator sets in a numerical example for $d_\mathrm{h}<K-|S_0|$.

\section{Backup placements for ensuring structural controllability in response to failures}
\label{sec: Robust actuator placement}

In many applications, the selected actuators $S$, that we call \textit{primary}, derived through \textbf{LHFG} may be offline due to potential damages/failure. An offline primary actuator can potentially make the system uncontrollable, which would be unacceptable. In this section, we first list two assumptions that motivate \textbf{P2}. We then show that \textbf{P2} is equivalent to hitting set problem, which has been well studied in the combinatorial optimization literature. 

\begin{assumption}
\label{ass: one actuator failure}
Only one actuator at a time can be offline.
\end{assumption}

This assumption can hold if primary actuator failures are not frequent and/or offline actuators can be restored quickly. If several actuators can go offline at the same time, at the current stage of our research, we need to enumerate all possible combinations of failures and it would be quite conservative to deploy backups for the worst case scenario.

\begin{assumption}
\label{sec: necessity}
There exists at least one actuator $v$ such that $(A,B(S\setminus v))$ is not structurally controllable.
\end{assumption}

A primary actuator is called \textit{essential} if its failure violates the structural controllability. In practice, even if the number of primary actuators is more than the minimum needed for structural controllability, it is often the case that the assumption above holds. We illustrate this phenomenon in Section \ref{sec: a numerical case study} with a numerical example.

Due to Assumption \ref{sec: necessity}, it is necessary to have backups for essential primary actuators. Under these assumption, \textbf{P2} in Section \ref{sec: Problem formulation} provides us with the minimal backup position set $\mathcal{B}$. With these {backup} actuators of $\mathcal{B}$, we can still retain structural controllability by replacing any single offline primary actuator.

To solve \textbf{P2}, we need to characterize its constraint set in a tractable manner. For this purpose, following definitions are in order.

\begin{definition}
Given the actuator position set $S$ such that $(A,B(S))$ is structurally controllable, we say $v$ is a DFR (dilation-freeness-recovering) backup position for $v_\mathrm{off}\in S$ if $(A,B(S\setminus v_\mathrm{off} \cup v))$ is dilation-free. We say $v$ is a feasible backup position for $v_\mathrm{off}$ if $(A,B(S\setminus v_\mathrm{off} \cup v))$ is structurally controllable.
\end{definition}

We will now provide a tractable characterization of the feasible backup positions for a given $v_\mathrm{off}$. To start with, consider the DFR backup positions. Recall that $S\in \mathcal{C}_K$ if and only if there exists a perfect matching $m_0$ in the bipartite graph $\mathcal{H}_b(S)$. With the primary actuator at $v_\mathrm{off}$ going offline, checking whether node $v$ is a DFR backup position is equivalent to checking whether there exists a perfect matching in $\mathcal{H}_b(S\setminus v_\mathrm{off}\cup v)$. For any $v\in V$, one can run the Edmonds-Karp Algorithm on $\mathcal{H}_b(S\setminus v_\mathsf{off}\cup v)$. By iteratively doing so, we can obtain all DFR backup positions for $v_\mathrm{off}$, with the complexity of $n^2l^2$. Such a naive approach fails to exploit the properties of matchings in bipartite graphs.

The following theorem characterizes all the DFR backup positions for $v_\mathrm{off}$ in a computationally efficient way.

\begin{theorem}
\label{thm: characterization_of_feasible_back_ups}
Let $S\in{\mathcal{C}}_K$ be the set of actuators. Suppose $v_\mathrm{off}\in S$ is offline and dilation freeness is lost, that is, there does not exist a perfect matching in $H_1 := \mathcal{H}_b(S\setminus v_\mathrm{off})$. A node $v\neq v_\mathrm{off}$ is DFR for $v_\text{off}$ if and only if there exist a perfect matching $m_0$ in $\mathcal{H}_b(S)$ and an alternating path $p = (v'_\mathrm{off},v_{p_1},v'_{p_2},\ldots,v_{p_{r}},v')$ where edges $(v'_\text{off},v_{p_1}),(v'_{p_2},v_{p_3}),\ldots,(v'_{p_{r-1}},v_{p_r}) \notin m_0 $, $(v_{p_1},v'_{p_2}),\ldots,(v_{p_r},v')\in m_0$.
\end{theorem}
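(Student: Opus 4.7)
The plan is to establish the biconditional in two directions, anchored by a short auxiliary observation: \emph{every perfect matching of $\mathcal{H}_b(S)$ must contain the edge $(v'_\mathrm{off},v''_\mathrm{off})$}, and analogously every perfect matching of $\mathcal{H}_b(S\setminus v_\mathrm{off}\cup v)$ must contain $(v',v'')$. Both follow by a contrapositive argument: if some perfect matching avoided the distinguished edge, it would remain a valid perfect matching after deleting the corresponding left-side node, contradicting the hypothesis that $H_1=\mathcal{H}_b(S\setminus v_\mathrm{off})$ admits no perfect matching. Both directions of the biconditional will hinge on this observation.

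For sufficiency, I take the given $m_0$ and alternating path $p$ and construct a perfect matching of $\mathcal{H}_b(S\setminus v_\mathrm{off}\cup v)$ by symmetric difference with $p$, combined with an edge swap at the endpoints: remove from $m_0$ the edge $(v'_\mathrm{off},v''_\mathrm{off})$ together with all matching edges of $p$, then add all non-matching edges of $p$ together with $(v',v'')$. The standard XOR-along-an-alternating-path argument guarantees the resulting set is a matching of the same cardinality as $m_0$; inspecting the endpoints shows $v'_\mathrm{off}$ is reassigned from $v''_\mathrm{off}$ to $v_{p_1}$ and $v'$ from $v_{p_r}$ to $v''$, so every node of $V'$ stays covered. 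This certifies that $v$ is DFR.

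For necessity, I take a perfect matching $m$ of $\mathcal{H}_b(S\setminus v_\mathrm{off}\cup v)$ witnessing DFR. The auxiliary observation applied to the modified graph gives $(v',v'')\in m$, so $\tilde m:=m\setminus\{(v',v'')\}$ is a size-$(n-1)$ matching in $\mathcal{H}_b(S)$ leaving only $v'$ uncovered on the $V'$-side and $v''_\mathrm{off}$ uncovered on the left. Since $S\in\mathcal{C}_K$, the graph $\mathcal{H}_b(S)$ admits a perfect matching, so $\tilde m$ admits an augmenting path $P$, which must begin at $v'$. I will argue that $P$ necessarily terminates at $v''_\mathrm{off}$: otherwise, augmenting along $P$ would produce a perfect matching of $\mathcal{H}_b(S)$ avoiding $(v'_\mathrm{off},v''_\mathrm{off})$, contradicting the auxiliary observation. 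Setting $m_0:=\tilde m\triangle P$, stripping $P$'s final edge $(v'_\mathrm{off},v''_\mathrm{off})$, and reversing direction yields the desired alternating path from $v'_\mathrm{off}$ to $v'$. A short side check rules out interior left nodes lying in $S''\setminus\{v''_\mathrm{off}\}$, since any such node has only one incident edge and would force the path to backtrack, so all interior left nodes belong to $V$, matching the theorem's labeling.

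The main obstacle is pinning down in the necessity direction that $P$ must terminate at $v''_\mathrm{off}$; the auxiliary observation is what carries this step. A secondary care point is reconciling the two alternation conventions---the augmenting path $P$ alternates with respect to $\tilde m$, whereas the theorem's path alternates with respect to $m_0$---which the augmentation operation precisely toggles between, converting the first edge of the reversed stripped path into a non-matching edge of $m_0$ and the last into a matching edge as required.
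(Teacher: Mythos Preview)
Your proof is correct and follows essentially the same augmenting-path strategy as the paper, with only a minor dual twist in the necessity direction: the paper fixes a perfect matching $m_0$ in $\mathcal{H}_b(S)$, deletes $(v'_\mathrm{off},v''_\mathrm{off})$, and finds an augmenting path in $H_2=\mathcal{H}_b(S\setminus v_\mathrm{off}\cup v)$, whereas you start from a perfect matching of $H_2$, delete $(v',v'')$, find an augmenting path in $\mathcal{H}_b(S)$, and then \emph{define} $m_0$ as the augmented matching. Both arguments hinge on the same degree-one property of the $S''$-nodes to force the augmenting path to the correct endpoint, so the two routes are symmetric reflections of one another rather than genuinely different ideas.
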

\begin{proof}
For both parts, observe that since there does not exist a perfect matching in $\mathcal{H}_b(S\setminus v_\mathrm{off})$, the matching $m_0$ must contain the edge $(v''_{\mathrm{off}},v'_{\mathrm{off}})$.

\textbf{Sufficiency}:  By utilizing the alternating path $p$, we exclude from $m_0$ the edges $(v''_{\mathrm{off}},v'_{\mathrm{off}})$, $(v_{p_1},v'_{p_2}),\ldots,(v_{p_r},v')$ and include $(v'_\text{off},v_{p_1}),(v'_{p_2},v_{p_3}),\ldots,(v'_{p_{r-1}},v_{p_r}), (v',v'')$ to form a new edge set $m_2$. One can verify that all the edges in $m_2$ are contained in $H_2 := \mathcal{H}_b(S\setminus v_\mathrm{off}\cup v)$ and $m_2$ is a perfect matching in $H_2$, in other words, $v$ is a DFR backup for $v_\text{off}$.

\textbf{Necessity}: Since $v$ is a DFR backup position for $v_\mathrm{off}$, there exists a perfect matching in $H_2$. In this bipartite graph with the non-perfect matching $m_0\setminus (v''_\mathrm{off},v'_\mathrm{off})$, there exists an augmenting path $p_\alpha$ and by augmentation on this path one can obtain in $H_2$ a perfect matching \cite{cormen2009introduction}, denoted as $m_\alpha$. Due to the fact that there does not exist a perfect matching in $H_1$, the node $v''$ must be incident to $p_\alpha$. Moreover, the node $v'_\mathrm{off}$ is also incident to $p_\alpha$, otherwise the perfect matching $m_\alpha$ formed by augmentation cannot cover $v'_\mathrm{off}$ which contradicts the perfectness. We trim $p_\alpha$ to form $p_\beta$ such that $p_\beta$ only contains the part of $p_\alpha$ between $v'_\mathrm{off}$ and $v'$. One can verify that $p_\beta$ is an alternating path in $\mathcal{H}_b(S)$ with respect to the matching $m_0\setminus (v''_\mathrm{off},v'_\mathrm{off})$ and satisfies the characterization specified in the theorem.\hfill\QEDA
\end{proof}

\begin{remark}
If the assumptions of Theorem \ref{thm: characterization_of_feasible_back_ups} do not hold, i.e., there exists a perfect matching in $H_1$, the actuator $v_{\mathrm{off}}$ going offline does not affect the dilation-freeness property. Hence, this actuator is not essential and any node can be a DFR backup position.
\end{remark}

To find all the alternating paths described in Theorem \ref{thm: characterization_of_feasible_back_ups}, we can use Breadth-First Search, whose complexity is $O(n+l)$. This allows us to efficiently find the DFR backup position set $\mathcal{D}(v)$, for any $v\in V$. Note that $v\in \mathcal{D}(v)$. 

It is then tractable to derive the feasible backup position set $\mathcal{F}(v_{\mathrm{off}})$ from the DFR backup position sets. It holds that $\mathcal{F}(v_{\mathrm{off}}) = \mathcal{D}(v_{\mathrm{off}})$ except for the following situation. Recall that through Algorithm \ref{alg:wms} we derive the strongly connected components with no incoming edges $\mathcal{T}^1,\ldots,\mathcal{T}^{n_{\mathcal{T}}}$. If there exists $i\in \mathbb{Z}^+$ such that $v_{\mathrm{off}}\in \mathcal{T}^i$ and $S\cap \mathcal{T}^i = v_{\mathrm{off}}$, then the actuator at $v_{\mathrm{off}}$ is the only one in $\mathcal{T}^i$. In case it goes offline, the system $(A,B(S\setminus v_{\mathrm{off}}))$ no longer satisfies accessibility. For this specific $v_{\mathrm{off}}$, we should have $\mathcal{F}(v_{\mathrm{off}}) = \mathcal{D}(v_{\mathrm{off}}) \cap \mathcal{T}^i$. 

With the feasible backup position sets $\mathcal{F}(v_1),\ldots,\mathcal{F}(v_K)$ obtained  via Theorem~\ref{thm: characterization_of_feasible_back_ups}, \textbf{P2} can be reformulated as follows.

\vspace{.1cm}
\begin{corollary}
\label{coro: hitting set problem}
Given the primary actuator positions $S$, the optimization problem \textbf{P2} is equivalent to the following:
\begin{equation}
\label{eq:mainproblem_2_reformulation}
\begin{split}
\min_{\mathcal{B}\subset V} & \quad \rvert \mathcal{B}\rvert \\
 \mathrm{s.t.} & \quad \forall v\in S, \exists b_v\in \mathcal{B}, \text{ with } b_{v}\in \mathcal{F}(v).
\end{split}
\end{equation}
\end{corollary}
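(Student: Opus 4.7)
The plan is to show that the two optimization problems have identical objective functions ($|\mathcal{B}|$) and identical feasible sets, which reduces the corollary to verifying that, for any $v \in S$, the constraint ``$(A,B(S \setminus v \cup b_v))$ is structurally controllable'' is equivalent to ``$b_v \in \mathcal{F}(v)$''. By the characterization of structural controllability recalled in Section~\ref{sec: Structural controllability}, the former constraint decomposes into two sub-conditions, dilation-freeness and accessibility, so I would handle them separately and then re-combine them.

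First I would address dilation-freeness. Theorem~\ref{thm: characterization_of_feasible_back_ups} together with the remark following it gives a clean characterization: $b_v \in \mathcal{D}(v)$ if and only if $(A,B(S \setminus v \cup b_v))$ is dilation-free. This covers both the case in which $v$ is essential (where Theorem~\ref{thm: characterization_of_feasible_back_ups} applies literally via the alternating-path construction) and the case in which $v$ is not essential (where the remark says any node may serve as a DFR backup, and indeed $\mathcal{D}(v) = V$).

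Next I would address accessibility. Since $S \in \mathcal{C}_K$ and $(A,B(S))$ is structurally controllable, $S$ already intersects every root strongly connected component $\mathcal{T}^1,\ldots,\mathcal{T}^{n_\mathcal{T}}$. Removing $v_\mathrm{off}$ preserves this intersection for every component unless there is a unique index $i$ with $v_\mathrm{off} \in \mathcal{T}^i$ and $S \cap \mathcal{T}^i = \{v_\mathrm{off}\}$; in that case, accessibility of the replacement system requires $b_v \in \mathcal{T}^i$, while in all other cases accessibility is preserved for every choice of $b_v$. This is exactly the case distinction used in the paragraph defining $\mathcal{F}(v_\mathrm{off})$ from $\mathcal{D}(v_\mathrm{off})$, namely $\mathcal{F}(v_\mathrm{off}) = \mathcal{D}(v_\mathrm{off})$ in general and $\mathcal{F}(v_\mathrm{off}) = \mathcal{D}(v_\mathrm{off}) \cap \mathcal{T}^i$ in the special situation. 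Combining the two sub-conditions, $b_v$ yields structural controllability if and only if $b_v \in \mathcal{F}(v)$.

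With this equivalence in hand, the constraint of \textbf{P2} reads ``$\forall v \in S, \exists b_v \in \mathcal{B}, b_v \in \mathcal{F}(v)$'', which is exactly the constraint of \eqref{eq:mainproblem_2_reformulation}, and since the two problems also share the same objective $|\mathcal{B}|$, every feasible set $\mathcal{B}$ for one is feasible for the other with identical cost, yielding the equivalence claim. I do not anticipate a significant obstacle here, as the heavy lifting (producing a tractable description of $\mathcal{F}(v)$) is already accomplished by Theorem~\ref{thm: characterization_of_feasible_back_ups} and the accessibility analysis immediately preceding the corollary; the corollary itself is essentially a bookkeeping statement that collects these pieces into a hitting-set-style formulation.
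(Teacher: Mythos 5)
Your proposal is correct and follows the same reasoning the paper relies on: the paper offers no explicit proof, treating the corollary as immediate from the definition of $\mathcal{F}(v)$ as the set of feasible backup positions, so that the constraint of \textbf{P2} literally rewrites as $b_v\in\mathcal{F}(v)$. Your additional verification that $\mathcal{F}(v)$ equals $\mathcal{D}(v)$ (or $\mathcal{D}(v)\cap\mathcal{T}^i$ in the special case) via the dilation-freeness/accessibility decomposition is exactly the bookkeeping the paper performs in the paragraphs preceding the corollary, so no gap remains.
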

\vspace{.1cm}

This is the classical hitting set problem \cite{angel2009minimum}, which is well-known to be NP-hard. There are extensive studies  proposing efficient approximate solutions with provable approximation ratios, e.g., the LP-based approach in \cite{krivelevich1997approximate} and the randomized algorithm in \cite{el2014randomised}. 

As a summary, our overall method is to first find the primary actuators using the methods in Section~\ref{sec: extensions}.
We then construct the feasible backup position sets for the essential ones. Finally, we solve the hitting set problem.
\vspace{.3cm}
\section{A numerical case study}
\label{sec: a numerical case study}
We test our algorithms on a linear system whose system matrix $A$ corresponds to the digraph $G$ illustrated in Figure \ref{fig:graph}.\footnote{\text{The code for this numerical case study is publicly available at}\newline {\href{https://github.com/odetojsmith/Actuator-Placement-beyond-SC-and-towards-Robustness}{\textcolor{blue}{https://github.com/odetojsmith/Actuator-Placement-beyond-SC-and-towards-Robustness}}}} In this graph, the edge weights are set to be all $1$ and there are $5$ strongly connected components. 

\begin{figure}[t]
    \centering
    \includegraphics[width = 10.42cm]{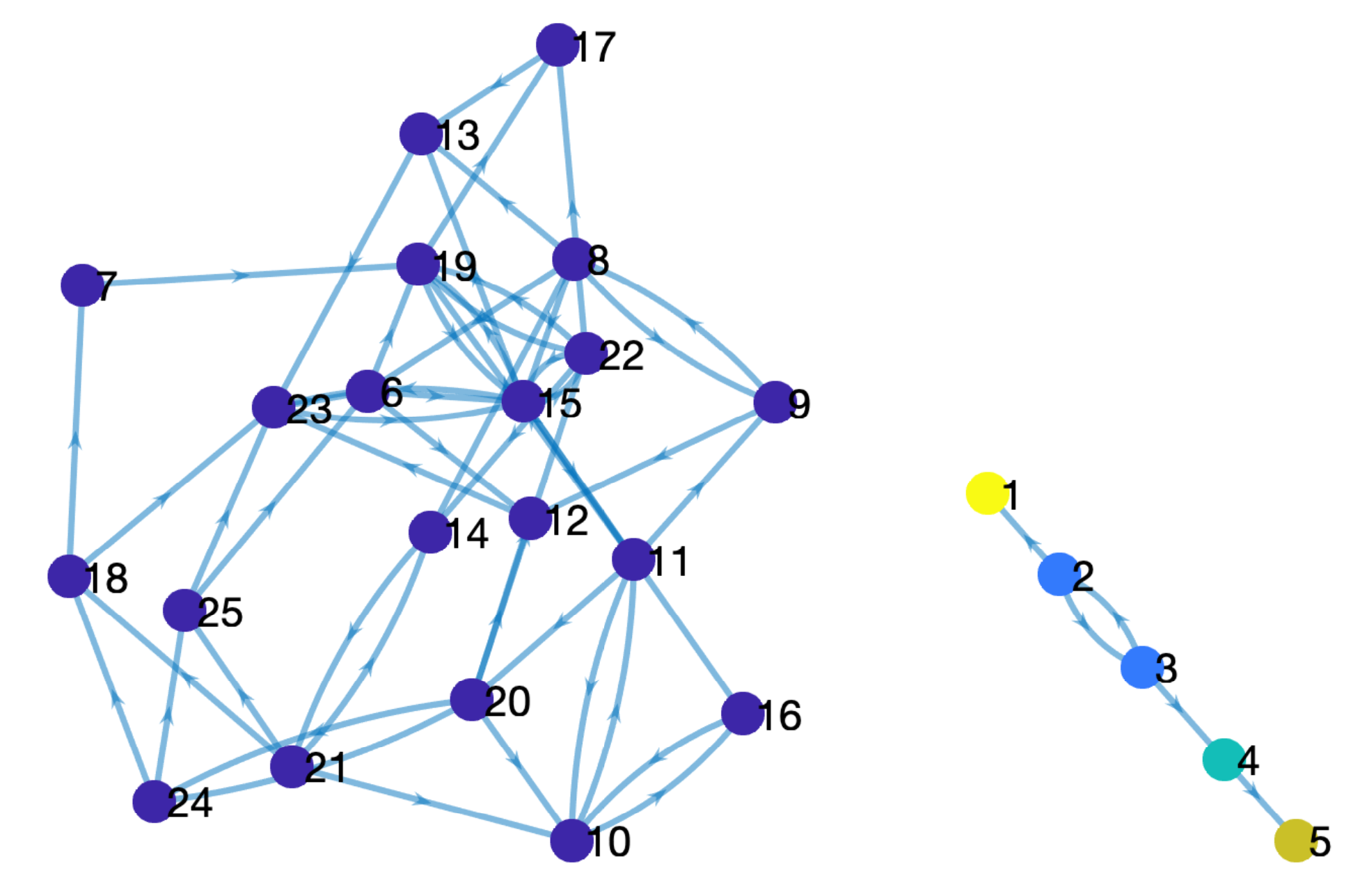}
    \caption{Digraph $G$ under consideration: strongly connected components are marked by different colors}
    \label{fig:graph}
\end{figure}

The metric under investigation is an approximate controllability metric $f = F_\epsilon$, where $F_{\epsilon}(S) =  \text{tr}((W_T(S)+\epsilon I)^{-1})$, and  $W_T(S)=\int_{0}^{T} e^{A\tau}B(S) B^\top (S) e^{A^\top\tau} d\tau$ is the controllability Grammian. This metric measures the average energy required for steering the system from $x_0$ with $||x_0||_2=1$ at $t=0$ to zero state at $t=T$. The constant term $\epsilon I$ allows for \textbf{FG} and \textbf{LHFG} to evaluate an uncontrollable actuator position set $S^0\notin \mathcal{C}_K$. We kindly refer the interested reader to \cite{tzoumas2018resilient,guo2020actuator} for more discussions on the metric $F_\epsilon(S)$. 
We let $\epsilon = 10^{-12}$ and $T=1$. Our goal is to find an actuator set $S$ with cardinality $9$ that minimizes the metric $F_\epsilon(S)$ while ensuring structural controllability of the resulting  system. 
\newpage

The initial actuator position set derived through Algorithm \ref{alg:wms} is $S^0 = \{16,2\}$, which is straightforward from the observation that, apart from the big strongly connected component containing nodes from 6 to 25, the node set $\{2,3\}$ is the only strongly connected component without any incoming edges. 

The actuator position sets derived through the forward greedy algorithm and the long-horizon greedy algoithm are respectively $S_\mathsf{f} = \textproc{FG}(S^0,\infty) = \{16,2,8,18,11,3,12,5,1\}$ and $S_\mathsf{lf} = \textproc{LHFG}(S^0,$
$\infty) = \{16, 2, 1, 13, 5, 8, 24, 14, 18\}$. The metrics under these two sets are $F_\epsilon(S_\mathsf{f}) = 7.56\times10^6$ and $F_\epsilon(S_\mathsf{lf}) = 1.08\times10^5$. Long horizon greedy provides $98.6\%$ improvement. The cost we pay for this perforance improvement is the computation time. It is given by 84.7 seconds for deriving $S_\mathsf{lf}$, whereas 1.3 seconds for deriving $S_\mathsf{f}$.\footnote{The case study is executed in Matlab R2019b on a computer equipped with 32GB RAM and a 2.3GHz Intel Core i9 processor.} To reduce the computational complexity, we let the horizon of \textbf{LHFG} to be $d_\mathrm{h}=3$ and obtain $S'_\mathsf{lf} = \textproc{LHFG}(S^0,3) = \{16, 2, 25, 1, 12, 5, 8, 20, 24\}$ with $F_\epsilon(S'_\mathsf{lf} ) = 1.34 \times 10^6$. As is discussed before, we do not have any performance guarantees for such modifications. The computation time for $S'_\mathsf{lf}$ is $64.6$ seconds. 

By checking the structural controllability when a single primary actuator gets offline, we find that the actuators at Node 1 and Node 2  are essential and they require backups. The feasible backup position sets are derived as $\mathcal{G}(v_1) = \{v_1,v_3\}$ and  $\mathcal{G}(v_2) = \{v_2\}$. Thus, for this system, one can select the backup position set as $\{v_2,v_3\}$. Suppose we place primary actuators at $S_\mathsf{lf}$ and $v_1$ goes offline. By activating the backup actuators at $v_3$, the metric is $1.07 \times 10 ^5$. In other words, this numerically verifies that the replacement do not jeopardize performance.
As a remark, in this example, the minimum number of actuators for structural controllability is $k=3$, less than $K=9$. Even then we see that there are essential actuators, which can result in the loss of structural controllability. Thus, it is necessary to detect such essential actuators and deploy backups.
\newpage

\section{Conclusion}
\label{sec: conclusions}
{In this paper, we studied the actuator placement problem minimizing a nonsubmodular and a nonsupermodular metric under the constraint of structural controllability. We extended the forward greedy algorithm (\textbf{FG}) to be applicable to arbitrary graphs and we proposed a novel algorithm, \textbf{LHFG}, which was proven to outperform \textbf{FG}. Then, to achieve robustness, we studied the minimal backup actuator placement problem and we showed that it is equivalent to the NP-hard hitting set problem.} 

{Our future work will focus on improving the computational complexity of \textbf{LHFG} and studying backup placement problem in case several primary actuators can go offline, simultaneously.}

\setstretch{1.05}
\bibliographystyle{IEEEtran}
\bibliography{IEEEabrv,library}
\vspace{.5cm}

\end{document}